\newtheorem{theorem}{Theorem}[section]
\newtheorem{lemma}{Lemma}[section]
\newtheorem{corollary}{Corollary}[section]
\newtheorem{definition}{Definition}[section]
\newenvironment{proof}[1][Proof]{\textbf{#1.} }{\ \rule{0.5em}{0.5em}}
\def \N{\mbox{I\hspace{-.15em}N}}
\def \R{\mbox{I\hspace{-.15em}R}}
\def \P{\mbox{I\hspace{-.15em}P}}
\def \E{\mbox{I\hspace{-.15em}E}}
\begin{document}

\author{Auguste Aman\thanks{E-mail address: augusteaman5@yahoo.fr}\\
{\sl UFR de Math\'{e}matiques et Informatique},\\
{\sl 22 BP 582 Abidjan 22,\ C\^{o}te d'Ivoire}\\
}\date{}
\title{$L^{p}$-solutions of backward doubly stochastic differential
equations} \maketitle

\begin{abstract}
In this paper, our goal is solving backward doubly
stochastic differential equation (BDSDE for
short) under weak assumptions on the data. The first part of the paper is devoted to the
development of some new technical aspects of stochastic calculus related to BDSDEs. Then we
derive a priori estimates and prove existence and uniqueness of solutions, extending the results of Pardoux and Peng \cite{PP1} to the case where the solution is taked in $L^{p},\, p>1$ and the monotonicity conditions are satisfied. This study is limited to deterministic terminal time.
\end{abstract}

\textbf{MSC 2000:} 60H05,\, 60H20.\\
\textbf{Key Words}: Backward doubly stochastic differential equations; monotone generator; $p$-integrable data.
\section{Introduction}
In this paper, we are concerned with backward doubly stochastic differential equations
(BDSDEs for short in the remaining); a BDSDE is an equation of the following type:
\begin{eqnarray}
Y_t &=& \xi +\int^{ T}_{t}f(r, Y_r, Z_r) dr+\int_{t}^{T}g(r,Y_r,Z_r)\overleftarrow{dB}_r-\int^{T}_{t}Z_r dW_r ,\;  0\leq t\leq T.\label{a0}
\end{eqnarray}
This kind of equations has two different directions of stochastic integrals,
i.e., the equations involve both a standard (forward) stochastic Itô integral $dW_t$ and
a backward stochastic Itô integral $\overleftarrow{dB}_t$. $\xi$ is a random variable measurable with
respect to the past of $W$ up to time $T$ and it called the terminal condition. $f$ and $g$ are coefficients (also called generator) and, the unknowns are the adapted processes $\{Y_t\}_{t\in[0,T]}$ and $\{Z_t\}_{t\in[0,T]}$ with respect to the object $\sigma(W_r; 0\leq r\leq t)\vee\sigma(B_r-B_t; t\leq r\leq T)$, which is not a filtration.
Such equations, in the nonlinear case, have been introduced by Pardoux and Peng \cite{PP1}. They proved an existence and uniqueness result under the following assumption: $f$ and $g$ are Lipschitz continuous in both variables $y$ and $z$ and the data, $\xi$ and the processes $\{f(t, 0, 0)\}_{t\in[0,T]}$ and $\{g(t, 0, 0)\}_{t\in[0,T]}$, are square integrable. They also showed that BDSDEs are useful in probabilistic representations for solutions to some quasi-linear stochastic
partial differential equations (SPDEs). Since this first existence and uniqueness result, many papers have been devoted to
existence and/or uniqueness results under weaker assumptions. Among these papers, we can distinguish two different classes: scalar BDSDEs and multidimensional BDSDEs.
In the first case, one can take advantage of the comparison theorem: we refer to Shi et al. \cite{Shial} for this result. In this spirit, let us mention the contributions of N'zi and Owo \cite{NO}, which dealt with discontinuous coefficients. For multidimensional BDSDEs, there is no comparison theorem and to overcome this difficulty a monotonicity assumption on the generator $f$ in the variable y is used. This appear in the works of Peng and Shi \cite{PengShi}. More recently, N'zi and Owo \cite{NO1} established existence and uniqueness result under non-Lipschitz assumptions
\begin{eqnarray*}
|f(t,y,z)-f(t,y',z')|\leq \rho(t,|y-y'|^{2})+C\|z-z'\|^{2}\\
\|g(t,y,z)-g(t,y',z')\|\leq \rho(t, |y-y'|^{2})+\alpha\|z-z'\|^{2}
\end{eqnarray*}
where $C>$ and $0<\alpha<1$ are two constants and, $\rho$ a positive function satisfied some appropriate condtions.

Let us mention also that when the generator $f$ is monotone and continuous in $y$ and, $g\equiv 0,$ a result of
Pardoux et al. \cite{Pal}, provides the existence and uniqueness of a solution when the data $\xi$ and
$\{f(t, 0, 0)\}_{t\in[0,T]}$ are in $L^{p}$ even for $p\in (1, 2)$, both for equations
on a fixed and on a random time interval. This paper is devoted to
the generalization of this result to the case of $g\neq 0$ and a monotone generator $f$ for BDSDEs
only on a fixed time interval.

The paper is organized as follows: the next section contains all the notations and
some basic identities, while Section 3 contains essential estimates. Section 4 is devoted
to the main result that is existence and uniqueness of BDSDEs $(\ref{a0})$ where the data are in $L^{p}$ with $p\in(1, 2)$ on a fixed time interval.

\section{ Preliminaries}
\setcounter{theorem}{0} \setcounter{equation}{0}
\subsection{Assumptions and basic notations}
Let $(\Omega, \mathcal{F}, \P)$ be a probability space, and $T >0$ be fixed throughout this paper.
Let $\{W_t, 0\leq t\leq T\}$ and $\{B_t, 0\leq t\leq T\}$ be two mutually independent standard
Brownian motion processes, with values respectively in $\R^{d}$ and in $\R^{\ell}$, defined
on $(\Omega, \mathcal{F}, \P)$. Let $\mathcal{N}$ denote the class of $\P$-null sets of $\mathcal{F}$. For each $t\in[0, T]$, we define
\begin{eqnarray*}
\mathcal{F}_{t}=\mathcal{F}_{t}^{B}\otimes\mathcal{F}^{W}_{t,T}
\end{eqnarray*}
where for any process $\{\eta_t\},\, \mathcal{F}^{\eta}_{s,t}=\sigma\{\eta_{r}-\eta_{s},s\leq r\leq t\}\vee \mathcal{N},\; \mathcal{F}^{\eta}_t=\mathcal{F}^{\eta}_{0,t}$.

Note that the collection $\{\mathcal{F}_t, t\in[0, T]\}$ is neither increasing nor decreasing,
and it does not constitute a filtration. For any real $p>0$,
let us define the following spaces:\newline
$\mathcal{S}^{p}(\R^{n})$, denotes set of $\R^{n}$-valued, adapted
c\`{a}dl\`{a}g processes $
 \{X_{t}\}_{t\in \lbrack 0,T]}$ such that
\begin{equation*}
\|X\|_{\mathcal{S}^{p}}=\E\left( \sup\limits_{0\leq t\leq
T}|X_{t}|^{p}\right) ^{1\wedge \frac{1}{p}}<+\infty;
\end{equation*}
and $\mathcal{M}^{p}(\R^{n})$) is the set of  predictable processes
$\{X_{t}\}_{t\in \lbrack 0,T]}$  such that
\begin{eqnarray*}
\|X\|_{\mathcal{M}^{p}}=\E \left[ \left(
\int_{0}^{T}|X_{t}|^{2}dt\right) ^{\frac{p}{2}}\right] ^{1\wedge \frac{1}{p}%
}<+\infty .
\end{eqnarray*}
If $p\geq 1$, then $\|X\|_{\mathcal{S}^{p}}$ (resp
$\|X\|_{\mathcal{M}^{p}}$) is
a norm on $\mathcal{S}^{p}(\R^n)$ (resp. $\mathcal{M}%
^{p}(\R^{n})$) and these spaces are Banach spaces. But if p$\in
\left( 0,1\right) ,$ $\left( X,X^{\prime }\right) \longmapsto
\left\| X-X^{\prime }\right\| _{\mathcal{S}^{p}}$ (resp
$\left\|
X-X^{\prime }\right\| _{\mathcal{M}^{p}}$) defines a distance on $\mathcal{S}%
^{p}(\R^n),$ (resp. $\mathcal{M}^{p}(%
\R^{n})$) and under this metric, $\mathcal{S}^{p}(%
\R^n)$ (resp. $\mathcal{M}^{p}(\R^{n}))$ is complete.

Let
\begin{eqnarray*}
f:\Omega\times [0,T]\times\R^k
\times\R^{d\times k}\rightarrow \R^k; \ g:\Omega\times [0,T]\times\R^k
\times\R^{d\times k}\rightarrow \R^{\ell}
\end{eqnarray*}
be jointly measurable such that for any $(y,z)\in \R\times\R^{d\times k}$
\begin{description}
\item ({\bf H1})\, $f(.,y,z)\in M^{p}(0,T,\R^k),\,\,\ g(.,y,z)\in M^{p}(0,T,\R^{k\times\ell})$
\end{description}
Moreover, we assume that there exist constants $\lambda>0$  and
$0<\alpha<1$ such that for any $(\omega,t)\in \ \Omega\times[0,T];
(y_{1},z_{1}), (y_{2},z_{2})\in \R^k\times\R^{k\times d}$,
\begin{description}
\item ({\bf H1})
$
\left\{
\begin{array}{l}
(i)\; |f(t,y_{1},z_{1})-f(t,y_{2},z_{2})|^{2}\leq\lambda\|z_{1}-z_{2}\|^{2},\\\\
(ii)\;\langle y-y',f(t,y,z)-f(t,y',z)\rangle\leq\mu|y-y'|^{2},\\\\
(iii)\;\|g(t,y_{1},z_{1})-g(t,y_{2},z_{2})\|^{2}\leq\lambda|y_{1}-y_{2}|^{2}+\alpha\|z_{1}-z_{2}\|^{2}.
\end{array}\right.
$
\end{description}
Given a $\R^k$-valued $\mathcal{F}_{T}$-measurable random vector $\xi$,
we consider the following backward doubly stochastic
differential equation:
\begin{eqnarray}
Y_{t}=\xi+\int_{t
}^{T}f(s,Y_{s},Z_{s})ds+\int_{t}^{T}g(s,Y_{s},Z_{s})\,\overleftarrow{dB}_{s}-\int_{t}^{
T}Z_{s}\,dW_{s},\,\ 0\leq t\leq T.\label{a1}
\end{eqnarray}
We recall that the integral with respect to $\{B_t\}$ is a "backward Itô integral"
and the integral with respect to $\{W_t\}$ is a standard forward Itô integral. These
two types of integrals are particular cases of the Itô-Skorohod integral, see
Nualart and Pardoux \cite{NP}.
Before of all let us give meaning of a solution of backward doubly SDE.
\begin{definition}
A solution of backward doubly SDE $(\ref{a1})$ is a pair $(Y_{t},Z_{t})_{0\leq t \leq
T}$  of progressively measurable processes taking values in $\R^{k}
\times \R^{k\times d}$ such that: $\P$ a.s., $t\mapsto Z_t$ belongs in $L^{2}(0,T),\, t\mapsto f(t,Y_t,Z_t)$ belongs in $L^{1}(0,T)$ and satisfies $(\ref{a1})$.
\end{definition}

\subsection{A basic identity}
In the spirit of the works of Pardoux et al. (see \cite{Pal}), which treated the BSDE case i.e $g\equiv 0$, we want to deal with backward doubly SDEs with data in $L^{p}$, $p\in(1,2)$. That why, we start by a generalization to the multidimensional case of the Tanaka formula. Let us now introduce the notation
$\hat{x} = |x|^{-1}x{\bf 1}_{\{x=0\}}$. The following lemma will be our basic tool in the treatment of $L^{p}$-solutions and generalized Lemma 2.2 of \cite{Pal}.
\begin{lemma}
Let $\{K_t\}_{t\in[0,T]},\,\{H_t\}_{t\in[0,T]}$ and $\{G_{t}\}_{t\in[0,T]}$ be three progressively measurable processes
with values respectively in $\R^k, \R^{k\times d}$ and  $\R^{k\times l}$ such that $\P$-a.s.,
\begin{eqnarray*}
\int_{0}^{T}(K_s+|H_s|^{2}+|G_s|^{2})ds<\infty.
\end{eqnarray*}
We consider the $\R^{k}$-valued semi martingale $\{X_t\}_{t\in[0,T]}$ defined by
\begin{eqnarray}
Y_{t}=X_0+\int_{0}^{t}K_s\ ds+\int_{0}^{t}G_s\ \overleftarrow{dB}_{s}
+\int_{0}^{t}H_s\ dW_{s},\;\; 0\leq t\leq T.
\end{eqnarray}
Then, for any $p\geq 1$, we have
\begin{eqnarray*}
|X_{t}|^{p}-{\bf 1}_{\{p=1\}}L_t&=&|X_0 |^{p}+p\int_{0}^{t}|X_{s}|^{p-1}\langle\hat{X}_{s},K_s\rangle ds\\
&&+p\int_{0}^{t}|X_{s}|^{p-1}\langle \hat{X}_{s},G_{s}\overleftarrow{dB}_{s}\rangle+p\int_{0}^{t}|X_{s}|^{p-1}\langle \hat{X}_{s},H_{s}dW_{s}\rangle\\
&&-\frac{p}{2}\int_{0}^{t}|X_{s}|^{p-2}{\bf 1}_{\{X_s\neq 0\}}\{(2-p)(|G_s|^{2}-\langle \hat{X}_{s},G_sG_s^{*}\hat{X}_{s}\rangle)+(p-1)|G_s|^{2}\}ds\\
&&+\frac{p}{2}\int_{0}^{t}|X_{s}|^{p-2}{\bf 1}_{\{X_s\neq 0\}}\{(2-p)(|H_s|^{2}-\langle \hat{X}_{s},H_sH_s^{*}\hat{X}_{s}\rangle)+(p-1)|H_s|^{2}\}ds,
\end{eqnarray*}
where $\{L_t\}_{t\in[0,T]}$ is a continuous, increasing process with $L_0=0$, which increases only
on the boundary of the random set $\{t\in[0,T], \, X_t = 0\}$.
\end{lemma}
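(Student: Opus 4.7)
The plan is to apply the doubly stochastic It\^o formula of Pardoux--Peng to a smooth approximation of $x\mapsto|x|^{p}$ and then let the regularisation parameter vanish. Concretely, for $\epsilon>0$ I set
\[
\phi_{\epsilon}(x)=(|x|^{2}+\epsilon^{2})^{p/2},\qquad x\in\R^{k},
\]
so that $\phi_{\epsilon}\in C^{2}(\R^{k})$, with
\[
\nabla\phi_{\epsilon}(x)=p\,(|x|^{2}+\epsilon^{2})^{p/2-1}\,x,
\]
\[
D^{2}\phi_{\epsilon}(x)=p(|x|^{2}+\epsilon^{2})^{p/2-1}I+p(p-2)(|x|^{2}+\epsilon^{2})^{p/2-2}xx^{*}.
\]
Applied to the semimartingale $X$, the doubly stochastic It\^o formula produces: the first-order term $\int_{0}^{t}\langle\nabla\phi_{\epsilon}(X_{s}),K_{s}\rangle ds$; the two stochastic integrals against $\overleftarrow{dB}$ and $dW$ with integrand $\nabla\phi_{\epsilon}(X_{s})$; and the two second-order integrals
\[
-\tfrac12\!\int_{0}^{t}\!\mathrm{Tr}\bigl[D^{2}\phi_{\epsilon}(X_{s})G_{s}G_{s}^{*}\bigr]ds+\tfrac12\!\int_{0}^{t}\!\mathrm{Tr}\bigl[D^{2}\phi_{\epsilon}(X_{s})H_{s}H_{s}^{*}\bigr]ds.
\]
The opposite signs are the signature of the doubly stochastic calculus, since backward It\^o increments carry the opposite sign in the quadratic variation.

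Substituting the explicit expression for $D^{2}\phi_{\epsilon}$ and using the purely algebraic identity
\[
|M|^{2}+(p-2)\langle\hat{x},MM^{*}\hat{x}\rangle=(2-p)\bigl(|M|^{2}-\langle\hat{x},MM^{*}\hat{x}\rangle\bigr)+(p-1)|M|^{2},\quad x\neq 0,
\]
rewrites the second-order integrands in precisely the form that appears in the statement, up to the $\epsilon$-regularisation. The point of this rearrangement is that, for $p\in[1,2]$, both brackets on the right-hand side are non-negative (the first because $|M|^{2}-\langle\hat{x},MM^{*}\hat{x}\rangle=|M|^{2}-|M^{*}\hat{x}|^{2}\geq 0$), which will be crucial when taking limits.

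It then remains to let $\epsilon\downarrow0$. On $\{X_{s}\neq 0\}$ every integrand converges pointwise to the corresponding limit; for the drift term and for the stochastic integrals the standing integrability of $K,G,H$ together with dominated convergence and the Burkholder--Davis--Gundy inequality permit passage to the limit in $L^{1}$ or in probability. The genuinely delicate step, and therefore the main obstacle, is the limit of the two second-order integrals through the random set $\{X_{s}=0\}$. For $p>1$ the integrand $|X_{s}|^{p-2}\mathbf{1}_{\{X_{s}\neq 0\}}$ multiplied by the non-negative quantities isolated above stays integrable in the limit and monotone convergence concludes, with no extra term surviving, so $L_{t}\equiv 0$. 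For $p=1$ the singularity is just barely non-integrable: neither second-order integral converges by itself near $\{X_{s}=0\}$, but the combination does converge, to the stated principal-value integrals plus a non-negative remainder $L_{t}$. By construction $L_{t}$ is a limit of continuous non-decreasing processes whose variation is concentrated in an arbitrarily small neighbourhood of $\{X_{s}=0\}$, so it inherits continuity, monotonicity and the stated support property; this is the multidimensional analogue of the classical Tanaka construction of local time at $0$ and closes the argument.
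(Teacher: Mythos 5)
Your proof is correct and follows essentially the same route as the paper: both regularise $|x|^{p}$ by $(|x|^{2}+\epsilon^{2})^{p/2}$, apply the doubly stochastic It\^o formula of Pardoux--Peng (with the characteristic opposite signs on the two second-order trace terms), and pass to the limit $\epsilon\downarrow 0$ following the scheme of Lemma 2.2 in Briand--Delyon--Hu--Pardoux--Stoica. In fact you supply more detail than the paper, which simply cites that lemma for the limiting argument, including the local-time term at $p=1$.
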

\begin{proof}
Since the function $x\mapsto|x|^p$ is not smooth enough (for $p\in[1,2)$), by the approximation argument used in \cite{Pal}, we set for $\varepsilon >0,\,
\begin{array}{l}
u_{\varepsilon}(x)=(|x|^{2}+\varepsilon^{2})^{1/2}.
\end{array}$
It is a smooth function and in virtue of Lemma 1.3 (e.g \cite{PP1}), we have
\begin{eqnarray*}
u_{\varepsilon}^{p}(X_{t})&=&u^{p}_{\varepsilon}(X_0)+p\int_{0}^{t}u_{\varepsilon}^{p-1}(X_{s})\langle X_{s},K_s\rangle ds\\
&&+p\int_{0}^{t}u_{\varepsilon}^{p-1}(X_{s})\langle X_{s},G_{s}\overleftarrow{dB}_{s}\rangle+p\int_{0}^{t}u_{\varepsilon}^{p-1}(X_{s})\langle X_{s},H_{s}dW_{s}\rangle\\
&&-\frac{1}{2}\int_{0}^{t}trace(D^{2}u_{\varepsilon}^{p}(X_{s})G_sG_s^{*})ds+\frac{1}{2}\int_{0}^{t}trace(D^{2}u_{\varepsilon}^{p}(X_{s})H_sH_s^{*})ds.
\end{eqnarray*}
The rest of the proof follows identically as lemma 2.2 proof's so that we omit it.
\end{proof}
\begin{corollary}
If $(Y,Z)$ is a solution of the BSDE $(\ref{a1}),\, p \geq 1,\, c(p)=p[(p-1)\wedge 1]/2$ and $0\leq t\leq T$, then
\begin{eqnarray*}
&&|Y_{t}|^{p}+c(p)\int_{t}^{T}|Y_s|^{p-2}{\bf 1}_{\{Y_s\neq 0\}}|Z_s|^{2}ds\\
&\leq &|Y_T |^{p}+p\int_{t}^{T}|Y_{s}|^{p-1}\langle\hat{Y}_{s},f(s,Y_s,Z_s)\rangle ds\\
&&+c(p)\int_{t}^{T}|Y_s|^{p-2}{\bf 1}_{\{Y_s\neq 0\}}|g(s,Y_s,Z_s)|^{2}ds\\
&&+p\int_{t}^{T}|Y_{s}|^{p-1}\langle \hat{Y}_{s},g(s,Y_{s},Z_s)\overleftarrow{dB}_{s}\rangle-p\int_{0}^{t}|Y_{s}|^{p-1}\langle \hat{Y}_{s},Z_{s}dW_{s}\rangle.
\end{eqnarray*}
\end{corollary}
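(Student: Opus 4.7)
The plan is to apply the preceding lemma directly to the solution process $Y$ on the interval $[t,T]$. Since $(\ref{a1})$ is written backward in time, the natural identification is $K_{s}=-f(s,Y_{s},Z_{s})$, $G_{s}=-g(s,Y_{s},Z_{s})$, $H_{s}=Z_{s}$. Substituting into the lemma's conclusion produces an identity expressing $|Y_{T}|^{p}-|Y_{t}|^{p}-{\bf 1}_{\{p=1\}}(L_{T}-L_{t})$ as the sum of a drift integral involving $f$, two stochastic integrals (one against $\overleftarrow{dB}$ and one against $dW$), and two second-order integrals, one built from $g$ and one from $Z$.

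The crucial observation is the asymmetry between the two second-order contributions: a backward It\^o integral contributes to the quadratic variation with the opposite sign from a forward one, and so in the lemma the $g$- and $Z$-second-order terms appear with opposite signs. Consequently, after rearranging to isolate $|Y_{t}|^{p}$ on the left, the $Z$-second-order integral lands on the left-hand side (with positive coefficient, next to $|Y_{t}|^{p}$), while the $g$-second-order integral stays on the right. The nonnegative term ${\bf 1}_{\{p=1\}}(L_{T}-L_{t})$ can then be discarded from the right to yield the desired $\leq$.

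The remaining ingredient is the elementary pointwise bound
$$\frac{p}{2}\bigl[(2-p)(|A|^{2}-\langle\hat{X},AA^{*}\hat{X}\rangle)+(p-1)|A|^{2}\bigr]\geq c(p)|A|^{2},$$
valid for any unit vector $\hat{X}$, any matrix $A$, and any $p\geq 1$. It follows at once from $0\leq\langle\hat{X},AA^{*}\hat{X}\rangle\leq|A|^{2}$ by splitting into the cases $1\leq p\leq 2$ (where $2-p\geq 0$ and one uses the nonnegativity of the first bracketed factor) and $p\geq 2$ (where $2-p\leq 0$ and one uses the upper bound on $\langle\hat{X},AA^{*}\hat{X}\rangle$). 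Applying this with $A=Z_{s}$ and $\hat{X}=\hat{Y}_{s}$ produces the $c(p)|Z_{s}|^{2}$ term on the left-hand side; the analogous upper bound applied to $A=g(s,Y_{s},Z_{s})$ produces the corresponding $g$-term on the right.

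The main obstacle is not any deep computation but the careful sign-bookkeeping: the backward time direction of $(\ref{a1})$ introduces the minus signs in $K$ and $G$, and the backward It\^o quadratic-variation correction carries the opposite sign to the forward one. Once these two conventions are handled consistently so that the $Z$- and $g$-integrals end up on opposite sides, the corollary is immediate from the preceding lemma together with the pointwise bound above.
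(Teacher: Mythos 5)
Your proposal follows the paper's own route step for step: apply Lemma 2.1 with the identification $K_s=-f(s,Y_s,Z_s)$, $G_s=-g(s,Y_s,Z_s)$, $H_s=Z_s$ dictated by the backward form of the equation, use the fact that the backward and forward quadratic-variation corrections enter with opposite signs so that the $Z$-term lands on the left and the $g$-term on the right, discard the nonnegative local-time increment, and finish with pointwise bounds on the two second-order integrands. If anything you are more explicit than the paper, whose proof says ``setting $K=f(\cdot,Y,Z)$ and $G=g(\cdot,Y,Z)$'' with the minus signs suppressed; your sign bookkeeping is the correct reading of that sentence.

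One step deserves a concrete warning, though it is a defect you share with the paper rather than one you introduced. Your displayed lower bound $\frac p2[(2-p)(|A|^{2}-\langle\hat X,AA^{*}\hat X\rangle)+(p-1)|A|^{2}]\ge c(p)|A|^{2}$ is correct and is exactly what the $Z$-term requires. But the ``analogous upper bound'' you invoke for the $g$-term is not symmetric: since $0\le\langle\hat X,AA^{*}\hat X\rangle\le|A|^{2}$, the sharp two-sided estimate is
\[
c(p)\,|A|^{2}\;\le\;\frac p2\Bigl[(2-p)\bigl(|A|^{2}-\langle\hat X,AA^{*}\hat X\rangle\bigr)+(p-1)|A|^{2}\Bigr]\;\le\;\frac p2\bigl[(p-1)\vee 1\bigr]\,|A|^{2},
\]
so the admissible constant on the right is $\frac p2[(p-1)\vee 1]$, not $c(p)=\frac p2[(p-1)\wedge 1]$. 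For $p\ge 2$, or in the scalar case $k=1$ (where $\langle\hat Y,gg^{*}\hat Y\rangle=|g|^{2}$ identically), these coincide with what the statement claims; but for $p\in(1,2)$ and $k\ge 2$ --- precisely the regime of this paper --- the $g$-integral on the right-hand side can only be produced with coefficient $p/2$: take $p=1$ and $g^{*}\hat Y=0$ to see that the bracket equals $|g|^{2}$ while $c(1)=0$. The paper's own proof commits the identical slip (its intermediate inequality writes $-c(p)\int|G_s|^{2}$ where only $-\frac p2[(p-1)\vee 1]\int|G_s|^{2}$ is justified), so the corollary should simply be read with the larger constant in front of the $g$-term; the later estimates only use that this constant is finite, so nothing downstream breaks. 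If your write-up is meant to stand alone, state that upper bound explicitly with its correct constant rather than calling it ``analogous''.
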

\begin{proof}
As consequence of Lemma 2.1, for $0\leq t\leq T$ and $c(p)=p[(p-1)\wedge 1]/2$, we get
\begin{eqnarray*}
|X_{T}|^{p}&\geq&|X_t |^{p}+p\int_{t}^{T}|X_{s}|^{p-1}\langle\hat{X}_{s},K_s\rangle ds\\
&&+p\int_{t}^{T}|X_{s}|^{p-1}\langle \hat{X}_{s},G_{s}\overleftarrow{dB}_{s}\rangle+p\int_{t}^{T}|X_{s}|^{p-1}\langle \hat{X}_{s},H_{s}dW_{s}\rangle\\
&&-c(p)\int_{t}^{T}|X_{s}|^{p-2}{\bf 1}_{\{X_s\neq 0\}}|G_s|^{2}ds\\
&&+c(p)\int_{t}^{T}|X_{s}|^{p-2}{\bf 1}_{\{X_s\neq 0\}}|H_s|^{2}ds.
\end{eqnarray*}
Replace $(X,H)$ by $(Y,Z)$ the solution of backward doubly SDE $(\ref{a1})$, setting $K=f(.,Y,Z)$ and $G=g(.,Y,Z)$, we get the result.
\end{proof}

\section{Apriori estimates}

We give now an estimate which permit to control the process
$Z$ with the data and the process $Y$.

\begin{lemma}
Let assumptions $\left({\bf H1}\right)$-$\left({\bf H2 }\right)$ hold and let $\left( Y,Z\right) $ be a solution of backward doubly SDE $(\ref{a1})$. If $Y\in \mathcal{S}^{p}$ then $Z$ belong to $\mathcal{M}%
^{p}$ and there exists a real constant $C_{p,\lambda}$ depending only on $p$ and $\lambda$
such that,
\begin{eqnarray*}
\E\left[ \left( \int_{0}^{T}|Z_{r}|^{2}dr\right)
^{p/2}\right]
  &\leq &C_{p}%
\E\left\{ \sup_{0\leq t\leq T}|Y_{t}|^{p}+\left(
\int_{0}^{T}|f^{0}_{r}|dr\right)^{p}+ \left( \int_{0}^{T}|g^{0}_{r}|^{2}dr\right)
^{p/2}\right\}.
\end{eqnarray*}
\end{lemma}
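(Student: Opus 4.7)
The plan is to apply Corollary 2.1 with $p=2$ (so $c(2)=1$) on $[0,T]$ and then convert the resulting pathwise inequality into an $L^{p/2}$ bound via the Burkholder--Davis--Gundy inequality. Since a priori $\int_0^T|Z_s|^2\,ds$ may fail to have finite moments, I first localize with the stopping times $\tau_n:=\inf\{t\geq 0:\int_0^t|Z_s|^2\,ds\geq n\}\wedge T$, work on $[0,\tau_n]$, and conclude by Fatou's lemma. Dropping $|Y_0|^2\geq 0$, the Corollary gives
\begin{equation*}
\int_0^{\tau_n}|Z_s|^2\,ds \leq |Y_{\tau_n}|^2 + 2\int_0^{\tau_n}\langle Y_s,f(s,Y_s,Z_s)\rangle\,ds + \int_0^{\tau_n}|g(s,Y_s,Z_s)|^2\,ds + M_{\tau_n}-N_{\tau_n},
\end{equation*}
where $M_t:=2\int_0^t\langle Y_s,g(s,Y_s,Z_s)\overleftarrow{dB}_s\rangle$ and $N_t:=2\int_0^t\langle Y_s,Z_s\,dW_s\rangle$ are local martingales.

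The structural conditions in (H2) now let me pull out a positive coefficient in front of $\int|Z|^2$. Writing $f^0_s:=f(s,0,0)$, $g^0_s:=g(s,0,0)$ and $Y^*:=\sup_{t\leq T}|Y_t|$, the monotonicity and Lipschitz bounds (H2)(i)--(ii) yield
\begin{equation*}
2\langle Y_s,f(s,Y_s,Z_s)\rangle\leq 2\mu|Y_s|^2+2|Y_s||f^0_s|+2\sqrt{\lambda}|Y_s||Z_s|,
\end{equation*}
while (H2)(iii) together with $(a+b)^2\leq(1+\varepsilon)a^2+(1+\varepsilon^{-1})b^2$ produces
\begin{equation*}
|g(s,Y_s,Z_s)|^2\leq(1+\varepsilon)\alpha|Z_s|^2+(1+\varepsilon)\lambda|Y_s|^2+(1+\varepsilon^{-1})|g^0_s|^2.
\end{equation*}
Choosing $\varepsilon$ so that $\alpha':=(1+\varepsilon)\alpha<1$ and $\eta$ small with $\kappa:=1-\alpha'-\eta>0$, and invoking $2\sqrt{\lambda}|Y||Z|\leq\eta|Z|^2+\lambda\eta^{-1}|Y|^2$, one absorbs every $|Z|^2$ contribution on the right into the left to reach
\begin{equation*}
\kappa\int_0^{\tau_n}|Z_s|^2\,ds \leq C\Big\{(Y^*)^2+Y^*\int_0^T|f^0_s|\,ds+\int_0^T|g^0_s|^2\,ds\Big\}+M_{\tau_n}-N_{\tau_n}.
\end{equation*}

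Raising to the $(p/2)$-th power and taking expectation, I use the elementary bound $(a+b)^{p/2}\leq C_p(a^{p/2}+b^{p/2})$ together with a Young step on $(Y^*)^{p/2}(\int|f^0|)^{p/2}$ to control the deterministic part by a multiple of $\E[(Y^*)^p+(\int_0^T|f^0|\,ds)^p+(\int_0^T|g^0|^2\,ds)^{p/2}]$. The Burkholder--Davis--Gundy inequality applied to $M$ and $N$ generates, after decomposing $|g|^2$ as above, a main residual $C_p\E[(Y^*)^{p/2}(\int_0^{\tau_n}|Z|^2\,ds)^{p/4}]$ plus lower-order terms in $Y^*$ and $g^0$; one further Young's inequality $(Y^*)^{p/2}(\int|Z|^2)^{p/4}\leq\tfrac{1}{2\delta}(Y^*)^p+\tfrac{\delta}{2}(\int|Z|^2)^{p/2}$ with $\delta$ small enough moves the $\int|Z|^2$ piece back to the left, and Fatou in $n$ finishes the argument. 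The main obstacle is the coordinated tuning of the three small parameters $\varepsilon$, $\eta$, $\delta$---each used to absorb a distinct $|Z|^2$ contribution---so that the coefficient of $\int|Z|^2$ on the left remains strictly positive throughout; the localization $\tau_n$ is essential, as it is what legitimizes each such absorption rigorously.
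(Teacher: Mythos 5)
Your proposal is correct and follows essentially the same route as the paper: localization by stopping times, It\^o/Corollary 2.1 applied to $|Y|^2$, Young's inequality to absorb every $|Z|^2$ contribution using $\alpha<1$, raising to the power $p/2$, BDG plus one more Young step for the two martingale terms, and Fatou's lemma to remove the localization. The only (cosmetic) difference is that you bound the $\int|Y_s|^2\,ds$ drift terms by $T\sup_t|Y_t|^2$, whereas the paper introduces an exponential weight $e^{ar}$ and chooses $a$ large enough to make those terms nonpositive; both yield the same estimate with constants of the same nature.
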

\begin{proof}
Let $a$ be a real constant and for each integer $n$ let us define:
\begin{eqnarray*}
\tau_{n}=\inf \left \{t\in [0,T],\int_{0}^{t}|Z_{r}|dr \geq n \right \}
 \wedge T.
\end{eqnarray*}
The sequence $(\tau_n)_{n\geq 0}$ is of stationary type since the process $Z$ belongs to $\mathcal{M}^{p}$ and then $\int^{T}_{0} |Z_s|^{2}ds <\infty,\, \P$- a.s..
Next we use It\^o's formula to give
\begin{eqnarray}
&&|Y_{0}|^{2}+\int_{0}^{\tau_{n}}e^{ar}|Z_{r}|^{2}dr\nonumber\\ &=&
e^{a\tau_n}|Y_{\tau_{n}}|^{2}+2\int_{0}^{\tau_{n}}e^{ar}\langle Y_{r}, f(r,Y_{r},Z_{r})-aY_r\rangle dr
 + \int_{0}^{\tau_{n}} e^{ar}|g(r,Y_{r},Z_r)|^{2}dr\nonumber \\
&& + 2\int_{0}^{\tau_{n}}e^{ar}\langle Y_{r},g(r,Y_r,Z_r) \overleftarrow{dB}_{r}\rangle - 2\int_{0}^{\tau_{n}}
e^{ar}\langle Y_{r}, Z_{r} dW_{r}\rangle.\label{estZ}
\end{eqnarray}
But, from  assumption on $f$ together the standard inequality $2ab\leq \frac{1}{\varepsilon}a^{2}+\varepsilon b^{2}$, for any $\varepsilon$, we have:
\begin{eqnarray*}
2\langle Y_{r}, f(r,Y_{r},Z_{r})-aY_r\rangle&\leq& 2|Y_r||f^{0}_r|+2\mu|Y_r|^{2}+2\lambda|Y_r||Z_r|-a|Y_r|^{2}\\\\
&\leq &2|Y_r||f^{0}_r|+(2\mu+2\lambda+\varepsilon^{-1}\lambda^{2}-a)|Y_r|^{2}+\varepsilon|Z_r|^{2}.
\end{eqnarray*}
Moreover, from the assumption on $g$, we use again the above standard inequality to get
\begin{eqnarray*}
\|g(r,Y_r,Y_r)\|^{2}\leq (1+\varepsilon')\lambda|Y_r|^{2}+(1+\varepsilon')\alpha|Z_r|^{2}+(1+\frac{1}{\varepsilon'})|g^{0}_r|^{2}.
\end{eqnarray*}
Plugging this two last inequalities in $(\ref{estZ})$, we obtain:
\begin{eqnarray*}
&&|Y_{0}|^{2}+\int_{0}^{\tau_{n}}e^{ar}|Z_{r}|^{2}dr\nonumber\\ &\leq&
e^{a\tau_n}|Y_{\tau_{n}}|^{2}+[2\mu+(3+\varepsilon')\lambda+\varepsilon^{-1}\lambda^{2}-a]\int_{0}^{\tau_{n}}e^{ar}|Y_r|^{2}dr\\
&& + [\varepsilon+(1+\varepsilon')\alpha]\int_{0}^{\tau_{n}} e^{ar}|Z_r|^{2}dr+2\int_{0}^{\tau_{n}} e^{ar}|Y_r||f^{0}_r|dr+(1+\frac{1}{\varepsilon'})\int_{0}^{\tau_{n}} e^{ar}|g^{0}_r|^{2}dr\nonumber \\
&& + 2\int_{0}^{\tau_{n}}e^{ar}\langle Y_{r},g(r,Y_r,Z_r) \overleftarrow{dB}_{r}\rangle - 2\int_{0}^{\tau_{n}}
e^{ar}\langle Y_{r}, Z_{r} dW_{r}\rangle.\label{estZ}
\end{eqnarray*}
Choosing now $\varepsilon$ and $\varepsilon'$ small enough such that $\varepsilon+(1+\varepsilon')\alpha<1$ and $a$ such that\newline $2\mu+(3+\varepsilon')\lambda+\varepsilon^{-1}\lambda^{2}-a\leq 0$, we derive
\begin{eqnarray*}
\left(\int_{0}^{\tau_{n}}|Z_{r}|^{2}dr\right)^{p/2} &\leq&
C_{p,\lambda}\left\{\sup_{0\leq t\leq\tau_{n}}|Y_{t}|^{p}+\left(\int_{0}^{\tau_{n}}|f^{0}_{r}|^{2} dr\right)^{p/2}+\left(\int_{0}^{\tau_{n}}|g^{0}_{r}|^{2}dr\right)^{p/2}\right.\\
&&\left.+\left|\int_{0}^{\tau_{n}}e^{\alpha r}\langle Y_{r}, g(s,Y_r,Z_{r})\overleftarrow{dB}_{r}\rangle\right|^{p/2}+\left|\int_{0}^{\tau_{n}}e^{\alpha r}\langle Y_{r}, Z_{r}dW_{r}\rangle\right|^{p/2}\right\}. \label{a2}
\end{eqnarray*}
Next thanks to BDG's inequality we have:
\begin{eqnarray*}
\E\left(\left|\int_{0}^{\tau_{n}}e^{\alpha r}\langle Y_{r},Z_{r}dW_{r}\rangle
\right|^{p/2}\right)&\leq& d_{p}\E\left[ \left(\int_{0}^{\tau_{n}}
|Y_{r}|^{2}|Z_{r}|^{2}dr\right)^{p/4}\right]\nonumber \\
&\leq& \bar{C}_{p}\E\left[ \sup_{0\leq t\leq \tau_{n}}|Y_{t}|^{p/2}\left(\int_{0}^{\tau_{n}}
|Z_{r}|^{2}dr\right)^{p/4}\right]\nonumber\\
&\leq& \frac{\bar{C}_{p}^{2}}{\eta_1}\E\left(\sup_{0\leq t\leq \tau_{n}}|Y_{t}|^{p}\right)+
\eta_1\E\left(\int_{0}^{\tau_{n}}|Z_{r}|^{2}dr\right)^{p/2}.
\label{a3}
\end{eqnarray*}
and
\begin{eqnarray*}
\E\left(\left|\int_{0}^{\tau_{n}}e^{\alpha r}\langle Y_{r}, g(s,Y_r,Z_{r})\overleftarrow{dB}_{r}\rangle\right|^{p/2}\right)&\leq& d_{p}\E\left[ \left(\int_{0}^{\tau_{n}}
|Y_{r}|^{2}|g(r,Y_r,Z_{r})|^{2}dr\right)^{p/4}\right]\nonumber \\
&\leq& \bar{C}_{p}\E\left[ \sup_{0\leq t\leq \tau_{n}}|Y_{t}|^{p/2}\left(\int_{0}^{\tau_{n}}
|g(r,Y_r,Z_{r})|^{2}dr\right)^{p/4}\right]\nonumber\\
&\leq& \frac{\bar{C}_{p}^{2}}{\eta_2}\E\left(\sup_{0\leq t\leq \tau_{n}}|Y_{t}|^{p}\right)+
\eta_2\E\left(\int_{0}^{\tau_{n}}|g(r,Y_r,Z_{r})|^{2}dr\right)^{p/2}\\
&\leq&C_p\E\left(\sup_{0\leq t\leq \tau_{n}}|Y_{t}|^{p}+\left(\int_{0}^{\tau_n}|g^{0}_r|^{2}\right)^{p/2}\right)\\
&&+(1+\eta_3)\eta_2\alpha\E\left(\int_{0}^{\tau_{n}}|Z_{r}|^{2}dr\right)^{p/2}.
\end{eqnarray*}
Finally plugging the two last inequalities in the previous one, choosing $\eta_1, \eta_2$ and $\eta_3$ small enough, we
finally use Fatou's Lemma to obtain the desired result.
\end{proof}

We keep on this study by stating the standard estimate in our context. The difficulty
comes from the fact that $f$ is not Lipschitz in $y$ and also from the fact that the function
$y\mapsto|y|^p$ is not $\mathcal{C}^{2}$ since we will work with $p\in(1,2)$.
\begin{lemma}
Assume $\left(
\text{\textbf{H1}}\right)$-$\left({ \bf H2}\right)$. Let
$\left(Y,Z\right)$ be a solution of the backward doubly SDE
associated to the data $(\xi,f,g)$ where $Y$ belong to
$\mathcal{S}^{p}$. Then there exists a constant $C_{p,\lambda}$ depending
only on $p$ and $\lambda$ such that
\begin{eqnarray*}
\E\left\{\sup_{0\leq t \leq T}|Y_{t}|^{p}+
\left(\int_{0}^{T}|Z_{s}|^{2}ds\right)^{p/2}
\right\}&\leq& C_{p,\lambda}\E\left\{|\xi|^{p}+\left(\int_{0}^{T}|f^{0}_{s}|ds\right)^{p}\right.\\
&&+\left.\left(\int_{0}^{T}|g^{0}_{s}|^{2}ds\right)^{p/2}+\int_{0}^{T}|Y_s|^{p-2}{\bf 1}_{\{Y_s\neq 0\}}|g^{0}_{s}|^{2}ds\right\}.
\end{eqnarray*}
\end{lemma}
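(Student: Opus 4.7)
The plan is to start from Corollary 2.1 applied to $(Y,Z)$, written between an arbitrary $t$ and $T$, and then to estimate each of the five terms on the right-hand side using the structural assumptions (H2). The integral $c(p)\int_t^T|Y_s|^{p-2}\mathbf{1}_{\{Y_s\neq 0\}}|Z_s|^2\,ds$ on the left is the key quantity: it plays the role of the quadratic weighted energy that classical $L^2$ arguments would control directly, but here it is only available in the weighted form, and most of the estimates have to be engineered so as to leave a positive multiple of it on the left-hand side.

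For the drift term I would split
\[
\langle \hat Y_s,f(s,Y_s,Z_s)\rangle=\langle \hat Y_s,f(s,Y_s,Z_s)-f(s,0,Z_s)\rangle+\langle \hat Y_s,f(s,0,Z_s)-f(s,0,0)\rangle+\langle \hat Y_s,f^{0}_s\rangle,
\]
use monotonicity (H2)(ii) on the first piece, Lipschitz-in-$z$ (H2)(i) on the second, and a Young inequality
$p\sqrt{\lambda}\,|Y_s|^{p-1}|Z_s|\le \eta\,|Y_s|^{p-2}\mathbf{1}_{\{Y_s\neq 0\}}|Z_s|^2+C_\eta|Y_s|^p$
on the cross term, so that a small fraction of the left-hand weighted $|Z|^2$ integral is absorbed by choosing $\eta$ small. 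The $f^{0}$ contribution is $p\int_t^T|Y_s|^{p-1}|f^0_s|\,ds\le p\sup_{s}|Y_s|^{p-1}\int_0^T|f^0_s|\,ds$, and a Young inequality with exponents $p/(p-1)$ and $p$ bounds this by $\tfrac12\,\E\sup|Y|^p+C_p\,\E(\int_0^T|f^0_s|\,ds)^p$, which is exactly one of the terms on the right-hand side of the lemma.

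For the diffusion term I would apply assumption (H2)(iii) with an auxiliary $\varepsilon'>0$ to get
\[
|g(s,Y_s,Z_s)|^2\le (1+\varepsilon')\lambda|Y_s|^2+(1+\varepsilon')\alpha|Z_s|^2+(1+\tfrac{1}{\varepsilon'})|g^0_s|^2.
\]
Multiplying by $c(p)|Y_s|^{p-2}\mathbf{1}_{\{Y_s\neq 0\}}$ and integrating, the $|Y|^p$ piece is a harmless Gronwall-type term, the $|g^0|^2$ piece produces the last term in the statement, and crucially the $|Z|^2$ piece appears with coefficient $c(p)(1+\varepsilon')\alpha$; since $\alpha<1$ one can choose $\varepsilon'$ so small that, after combining with the drift absorption above, a strictly positive multiple $c(p)(1-\alpha)-\eta$ of $\int_t^T|Y_s|^{p-2}\mathbf{1}_{\{Y_s\neq 0\}}|Z_s|^2\,ds$ still sits on the left. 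This is the main technical obstacle, since if one tried to avoid the weighted form one would lose $\alpha<1$.

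Finally I would take $\sup_{0\le t\le T}$, handle the two stochastic integrals by the Burkholder--Davis--Gundy inequality exactly as in the bound of Lemma~3.1 (writing the brackets as $\int|Y|^{2(p-1)}|Z|^2\,ds$ and $\int|Y|^{2(p-1)}|g|^2\,ds$, then using $|Y|^{2(p-1)}\le \sup|Y|^{p}\cdot |Y|^{p-2}\mathbf{1}_{\{Y_s\neq 0\}}$ and Young's inequality to split off a small $\E\sup|Y|^p$ and small weighted-$|Z|^2$ term), and invoke a standard stopping-time/Fatou argument to make this rigorous despite the lack of a priori integrability of the local martingales. After absorption this yields an estimate for $\E\sup|Y|^p$ plus $\E\int|Y|^{p-2}\mathbf{1}_{\{Y\neq 0\}}|Z|^2\,ds$ in terms of the right-hand side of the lemma. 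The corresponding estimate for $\E(\int_0^T|Z_s|^2\,ds)^{p/2}$ is then read off from Lemma~3.1, completing the proof.
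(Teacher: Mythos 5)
Your proposal is correct and follows essentially the same route as the paper's own proof: starting from Corollary 2.1, using the monotonicity/Lipschitz structure of $f$ and the $(1+\varepsilon)$-split of $|g|^2$ so that a positive multiple of the weighted integral $\int_t^T|Y_s|^{p-2}{\bf 1}_{\{Y_s\neq 0\}}|Z_s|^2\,ds$ survives on the left (this is exactly the paper's $\alpha'=1-[(1+\varepsilon)\alpha+\gamma]>0$), then BDG plus Young on the two martingale brackets, Gronwall, the Young step for the $|f^0|$ term, and finally Lemma 3.1 for the $\E\bigl(\int_0^T|Z_s|^2ds\bigr)^{p/2}$ part. The only cosmetic difference is that the paper carries exponential weights $e^{aps}$ and chooses $a$ large where you invoke Gronwall directly, which is equivalent.
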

\begin{proof}
From Corollary 2.1 for any $a>0$ and any $0\leq t\leq u\leq T$ we have
\begin{eqnarray*}
&&e^{apt}|Y_{t}|^{p}+
c(p)\int_{t}^{u}e^{aps}|Y_s|^{p-2}{\bf 1}_{\{Y_s\neq\ 0\}}|Z_{s}|^{2}ds\\
&\leq& e^{apu}|Y_u|^{p}-ap\int_{u}^{T}e^{aps}|Y_s|^{p}ds+p\int_{t}^{u}e^{aps}|Y_s|^{p-1}\langle\widehat{Y}_s,f(s,Y_s,Z_s)\rangle ds\\
&&+c(p)\int_{t}^{u}e^{aps}|Y_s|^{p-2}{\bf 1}_{\{Y_s\neq\ 0\}}|g(s,Y_s,Z_s)|^{2}ds+p\int_{t}^{u}e^{aps}|Y_s|^{p-1}\langle \widehat{Y}_s,g(s,Y_s,Z_s)\overleftarrow{dB}_s\rangle\\
&&-p\int_{t}^{u}e^{aps}|Y_s|^{p-1}\langle\widehat{Y}_s,Z_sdW_s\rangle.
\end{eqnarray*}
The assumption on $f$ and $g$ yields the inequalities
\begin{eqnarray}
\langle \hat{y},f(s,y,z)\rangle&\leq& |f_{s}^{0}| + \mu|y| + \lambda|z|\nonumber\\
&& \mbox{and}\label{A}\\
\|g(s,y,z)\|^{2}&\leq& (1+\varepsilon)\lambda|y|^{2}+(1+\varepsilon)\alpha|z|^{2}+(1+\frac{1}{\varepsilon})|g^{0}_s|^{2},\nonumber
\end{eqnarray}
for $\varepsilon>0$, from which we deduce that with probability one, for all $t\in[0,T]$,
\begin{eqnarray*}
&&e^{apt}|Y_{t}|^{p}+
c(p)\int_{t}^{u}e^{aps}|Y_s|^{p-2}{\bf 1}_{\{Y_s\neq\ 0\}}|Z_{s}|^{2}ds\\
&\leq& e^{apu}|Y_u|^{p}+[p(\mu-a)+c(p)(1+\varepsilon)\lambda]\int_{u}^{T}e^{aps}|Y_s|^{p}ds\\
&&+p\int_{t}^{u}e^{aps}|Y_s|^{p-1}|f^{0}_s|ds+c(p)(1+\varepsilon^{-1})\int_{t}^{u}e^{aps}|Y_s|^{p-2}{\bf 1}_{\{Y_s\neq\ 0\}}|g_{s}^{0}|^{2}ds\\
&&+c(p)(1+\varepsilon)\alpha\int_{t}^{u}e^{aps}|Y_s|^{p-2}{\bf 1}_{\{Y_s\neq\ 0\}}|Z_s|^{2}ds+p\lambda\int_{t}^{u}e^{aps}|Y_s|^{p-1}|Z_s|ds
\\
&&+p\int_{t}^{u}e^{aps}|Y_s|^{p-1}\langle \widehat{Y}_s,g(s,Y_s,Z_s)\overleftarrow{dB}_s\rangle-p\int_{t}^{u}e^{p\alpha s}|Y_s|^{p-1}\langle \widehat{Y}_s,Z_sdW_s\rangle .
\end{eqnarray*}
First of all we deduce from the previous inequality that, $\P$-a.s.,
\begin{eqnarray*}
\int_{0}^{T}e^{aps}|Y_s|^{p-2}{\bf 1}_{\{Y_s\neq\ 0\}}|Z_{s}|^{2}ds<\infty.
\end{eqnarray*}
Moreover, use again the standard algebraic inequality we have
\begin{eqnarray*}
p\lambda|Y_s|^{p-1}|Z_s|\leq \gamma^{-1}\frac{p\lambda^{2}}{2[(p-1)\wedge 1]}|Y_s|^{p}+\gamma c(p)|Y_s|^{p-1}{\bf 1}_{\{Y_s\neq 0\}}|Z_s|^{2},
\end{eqnarray*}
for any $\gamma$. Thus we take $\varepsilon, \gamma$ small enough such that $\alpha'=1-[(1+\varepsilon)\alpha+\gamma]>0$ and set $T=u$ to derive:
\begin{eqnarray}
&&+\alpha'c(p)\int_{t}^{T}e^{aps}|Y_s|^{p-2}{\bf 1}_{\{Y_s\neq\ 0\}}|Z_{s}|^{2}ds\nonumber\\
&\leq& e^{apT}|\xi|^{p}+p\int_{t}^{T}e^{aps}|Y_s|^{p-1}|f^{0}_s|ds\nonumber\\
&&+c(a,\varepsilon,\gamma)\int_{t}^{T}e^{aps}|Y_s|^{p}ds+c(p)(1+\varepsilon^{-1})\int_{t}^{T}e^{aps}|Y_s|^{p-2}{\bf 1}_{\{Y_s\neq\ 0\}}|g_{s}^{0}|^{2}ds\nonumber\\
&&+p\int_{t}^{T}e^{aps}|Y_s|^{p-1}\langle \widehat{Y}_s,g(s,Y_s,Z_s)\overleftarrow{dB}_s\rangle
-p\int_{t}^{T}e^{p\alpha s}|Y_s|^{p-1}\langle \widehat{Y}_s,Z_sdW_s\rangle,\label{estYZ3}
\end{eqnarray}
where $c(a,\varepsilon,\gamma)=\mu+[(1+\varepsilon)\lambda+\gamma^{-1}\lambda^{2}]/2[(p-1)\wedge 1]-a$.

Let us set
\begin{eqnarray*}
X &=&e^{apT}|\xi|^{p}+p\int_{0}^{T}e^{aps}|Y_s|^{p-1}|f^{0}_s|ds+c(p)(1+\varepsilon^{-1})\int_{0}^{T}e^{aps}|Y_s|^{p-2}{\bf 1}_{\{Y_s\neq\ 0\}}|g_{s}^{0}|^{2}ds,
\end{eqnarray*}
then, taking the expectation, the use of Gronwall lemma give us
\begin{eqnarray*}
\E\left(e^{apt}|Y_{t}|^{p}\right)\leq C_p \E(X)
\end{eqnarray*}
because, from BDG inequality, on can show that $M_t=\{\int_{t}^{T}e^{aps}|Y_s|^{p-1}\langle \widehat{Y}_s,g(s,Y_s,Z_s)\overleftarrow{dB}_s\rangle\}$ and $N_t=\{\int_{t}^{T}e^{p\alpha s}|Y_s|^{p-1}\langle \widehat{Y}_s,Z_sdW_s\rangle\}$ are respectively uniformly integrable martingale.
Coming back to inequality $(\ref{estYZ3})$, taking the expectation, for t = 0, we get
\begin{eqnarray}
&&\alpha'c(p)\E\int_{0}^{T}e^{aps}|Y_s|^{p-2}{\bf 1}_{\{Y_s\neq\ 0\}}|Z_{s}|^{2}ds\leq C_p\E(X)\label{estZY1}
\end{eqnarray}
and
\begin{eqnarray}
&&\E\left(\sup_{0\leq t\leq T}e^{apt}|Y_{t}|^{p}\right)\leq \E(X)+k_p\E\langle M,M\rangle_T^{1/2}+h_p\E\langle N,N\rangle_T^{1/2}.\label{estYZ2}
\end{eqnarray}
But, we have
\begin{eqnarray*}
h_p\E\langle N,N\rangle_T^{1/2}\leq \frac{1}{4}\E\left(\sup_{0\leq t\leq T}e^{apt}|Y_{t}|^{p}\right)+4h^{2}_p\E\int_{0}^{T}e^{aps}|Y_s|^{p-2}{\bf 1}_{\{Y_s\neq\ 0\}}|Z_{s}|^{2}ds
\end{eqnarray*}
and
\begin{eqnarray*}
k_p\E\langle M,M\rangle_T^{1/2}&\leq& \frac{1}{4}\E\left(\sup_{0\leq t\leq T}e^{apt}|Y_{t}|^{p}\right)+4k^{2}_p\E\int_{0}^{T}e^{aps}|Y_s|^{p-2}{\bf 1}_{\{Y_s\neq\ 0\}}|g(s,Y_s,Z_s)|^{2}ds\\
&\leq&\frac{1}{4}\E\left(\sup_{0\leq t\leq T}e^{apt}|Y_{t}|^{p}\right)+4h^{2}_p\E\int_{0}^{T}e^{aps}|Y_s|^{p-2}{\bf 1}_{\{Y_s\neq\ 0\}}|Z_{s}|^{2}ds\\
&&+d_p\E\left(X\right).
\end{eqnarray*}
Coming back to inequalities $(\ref{estZY1})$ and $(\ref{estYZ2})$,  we obtain
\begin{eqnarray*}
\E\left(\sup_{0\leq t\leq T}e^{apt}|Y_{t}|^{p}\right)\leq C_p\E(X)
\end{eqnarray*}
Applying once again Young's inequality, we get
\begin{eqnarray*}
pC_p\int_{0}^{T}e^{p\alpha s}|Y_s|^{p-1}|f^{0}_s|ds&\leq &\frac{1}{2}\sup_{0\leq s\leq T}|Y_s|^{p}+C'_p\left(\int_{0}^{T}e^{p\alpha s}|f^{0}_s|ds\right)^{p}
\end{eqnarray*}
from which we deduce, coming back to the definition of $X$, that
\begin{eqnarray*}
\E\left(\sup_{0\leq t\leq T}e^{apt}|Y_{t}|^{p}\right)\leq C_p\E\left[|\xi|^{p}+\left(\int_{0}^{T}e^{p\alpha s}|f^{0}_s|ds\right)^{p}
+\int_{0}^{T}e^{aps}|Y_s|^{p-2}{\bf 1}_{\{Y_s\neq\ 0\}}|g^{0}_{s}|^{2}ds\right].
\end{eqnarray*}
The result follows from Lemma 3.1.
\end{proof}

\section{Existence and uniqueness of a solution}
\setcounter{theorem}{0} \setcounter{equation}{0}
In this section we prove existence and uniqueness result for the backward doubly SDE associated to data $(\xi,f,g)$ in
$L^{p}$, with the help of $L^{\infty}$-approximation and priori estimates given above.

In addition to the previous hypothesis, we will work under the following assumptions: for some $p>1$,
\begin{description}
\item ({\bf H3})
$
\left\{
\begin{array}{l}
(i)\;\E\left[|\xi|^{p}\right]<\infty,\\\\
(ii)\;\P\, a.s.\; \forall\, (t,z)\in[0,T]\times\R^{k\times d},\; y\mapsto f(t,y,z)\, \mbox{is continuous},\\\\ 
(iii)\; g(.,0,0)\equiv 0,\\\\
(iv)\;\forall\, r>0, \; \psi_r(t)=\sup_{|y|<r}|f(t,y,0)-f_{t}^{0}|\in L^{1}([0,T]\time\Omega,m\otimes\P).
\end{array}\right.
$
\end{description}
We want to obtain an existence and uniqueness result for backward doubly SDE $(\ref{a1})$ under the
previous assumptions for all $p>0$.

Firstly, let us give this result, that in our mind extended the result of Pardoux and Peng (see Theorem 1.1, \cite{PP1}). Indeed, here the coefficient $f$ is supposed to be non-Lipschitz in $y$ but monotonic. For this, let us introduce the following assumption:
\begin{description}
\item ({\bf H4})\;$\P\, a.s.\; \forall\, (t,y)\in[0,T]\times\R^{k},\, |f(t,y,0)|\leq |f(t,0,0)|+\varphi(|y|)$,
\end{description}
where $\varphi:\R_+\rightarrow\R_+$ is a deterministic continuous increasing function.
\begin{theorem}
Let $p=2$. Under assumptions $({\bf H1)}$-$({\bf H4)}$, BSDE $(\ref{a1})$ has a
unique solution in $S^2 \times M^2$.
\end{theorem}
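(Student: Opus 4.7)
The plan is to establish uniqueness first by an It\^o-based contraction argument exploiting $\alpha<1$, then to construct a solution by approximating the monotone generator $f$ with a sequence of Lipschitz generators $f_n$, solving each approximate equation via Pardoux--Peng \cite{PP1}, and passing to the limit using the a priori estimates of Section 3.

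\textbf{Uniqueness.} Given two solutions $(Y,Z)$ and $(Y',Z')$ in $\mathcal{S}^2\times\mathcal{M}^2$, set $\Delta Y:=Y-Y'$ and $\Delta Z:=Z-Z'$ and apply It\^o's formula to $e^{at}|\Delta Y_t|^2$. Splitting the drift cross term into $\langle\Delta Y,f(\cdot,Y,Z)-f(\cdot,Y',Z)\rangle+\langle\Delta Y,f(\cdot,Y',Z)-f(\cdot,Y',Z')\rangle$ and using (H2)(ii) together with (H2)(i) and $2ab\le\varepsilon^{-1}a^2+\varepsilon b^2$, one bounds this by $(\mu+\lambda/(2\varepsilon))|\Delta Y|^2+(\varepsilon/2)|\Delta Z|^2$. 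The quadratic variation of the $g$-term adds $\lambda|\Delta Y|^2+\alpha|\Delta Z|^2$ via (H2)(iii). Since $\alpha<1$, choosing $\varepsilon$ with $\varepsilon/2+\alpha<1$ and $a$ large enough that $\mu+\lambda+\lambda/(2\varepsilon)-a\le 0$, and taking expectations (the stochastic integrals are true martingales on $\mathcal{S}^2\times\mathcal{M}^2$), one obtains $\E|\Delta Y_t|^2=0$ for every $t$ and $\E\int_0^T|\Delta Z_s|^2\,ds=0$.

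\textbf{Existence.} Construct a Lipschitz approximation $f_n$ of $f$ via inf-convolution (or Moreau--Yosida) in the $y$-variable applied to the monotone operator $y\mapsto-f(t,y,z)+\mu y$, uniformly in $z$; one checks that $f_n$ is Lipschitz in $(y,z)$, inherits (H2)(i)--(ii) with the same constants, converges pointwise to $f$ by continuity (H3)(ii), and satisfies $|f_n(t,y,z)|\le|f(t,0,0)|+\varphi(|y|)+\sqrt{\lambda}|z|$ thanks to (H4). Pardoux--Peng's theorem \cite{PP1} then furnishes, for each $n$, a unique solution $(Y^n,Z^n)\in\mathcal{S}^2\times\mathcal{M}^2$ of the BDSDE with data $(\xi,f_n,g)$, and Lemma 3.2 applied with $p=2$ yields uniform estimates on $\E\sup_t|Y^n_t|^2+\E\int_0^T|Z^n_s|^2\,ds$ (the right-hand side being finite because $g^0\equiv 0$ by (H3)(iii)). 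Running the same It\^o computation on $|Y^n-Y^m|^2$, the only new ingredient is the error term $\int_0^T\langle Y^n_s-Y^m_s,f_n(s,Y^m_s,Z^m_s)-f_m(s,Y^m_s,Z^m_s)\rangle\,ds$, whose expectation tends to zero from pointwise convergence $f_n\to f$, the uniform $\mathcal{S}^2\times\mathcal{M}^2$ bound on $(Y^m,Z^m)$, and dominated convergence based on (H4) and (H3)(iv). Hence $(Y^n,Z^n)$ is Cauchy, and its limit $(Y,Z)$ satisfies (\ref{a1}) by a final dominated-convergence argument using the continuity of $f$ in $y$ and the Lipschitz properties in $z$ of $f$ and in $(y,z)$ of $g$.

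\textbf{Main obstacle.} The delicate step is the Cauchy error control, since $f$ is only continuous (not Lipschitz) in $y$: one cannot use a direct difference estimate and must combine the uniform $L^1$-integrability of $\psi_r$ from (H3)(iv) on $\{|Y^m|\le r\}$ with a tightness bound from Lemma 3.2 on $\{|Y^m|>r\}$, the classical monotone-BSDE trick extended here to the doubly stochastic setting. A secondary technical nuance, absent in \cite{Pal}, is the presence of the backward It\^o term whose quadratic variation involves $|g|^2$: the contraction hypothesis $\alpha<1$ is precisely what allows the $|\Delta Z|^2$ contribution coming from $|g(\cdot,Y,Z)-g(\cdot,Y',Z')|^2$ to be absorbed into the left-hand side in both the uniqueness and Cauchy calculations.
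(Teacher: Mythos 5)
Your uniqueness argument is correct: it is the standard contraction computation, and it is in substance how the paper itself obtains uniqueness (in Theorem 4.2, via Lemma 3.2 applied to the difference of two solutions). Note also that the paper offers no detailed proof of Theorem 4.1 --- it is a one-line citation of Pardoux \cite{P} and Pardoux--Peng \cite{PP1} --- so the benchmark for your existence part is the scheme of those references, which the paper replays in Step 1 and Step 2 of the proof of Theorem 4.2: regularize $f$ in $y$, solve by Pardoux--Peng, pass to the limit. Your overall strategy matches that scheme, and your checks that the Yosida approximants remain monotone, Lipschitz in $z$ with the same constant, and solvable by \cite{PP1} are fine.

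The genuine gap is in the passage to the limit. You control the error term
\[
\E\int_0^T\langle Y^n_s-Y^m_s,\,f_n(s,Y^m_s,Z^m_s)-f_m(s,Y^m_s,Z^m_s)\rangle\,ds
\]
by ``pointwise convergence of $f_n$ plus dominated convergence'', refined in your last paragraph into a split over $\{|Y^m_s|\le r\}$ and $\{|Y^m_s|>r\}$ with a tightness bound on the bad set. This cannot close as stated. First, the evaluation point $(Y^m_s,Z^m_s)$ moves with $m$, so pointwise convergence of $f_n$ is not enough; one needs an error bound that is uniform over the region where the approximate solutions live. Second, and more seriously, any domination of $f_n(s,Y^m_s,Z^m_s)$ that is uniform in $n$ costs $\varphi(|Y^m_s|)$: the Yosida approximants satisfy nothing better than $|f_n(t,y,z)|\le |f(t,y,z)|+2\mu|y|\le |f^0_t|+\varphi(|y|)+\sqrt{\lambda}\,|z|+2\mu|y|$, and under $({\bf H4})$ the function $\varphi$ has arbitrary growth (e.g. $\varphi(x)=e^{x^2}$), so $\varphi(|Y^m_s|)$ need not be integrable when $Y^m$ is merely bounded in $\mathcal{S}^2$. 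For the same reason the tightness estimate $\P(|Y^m_s|>r)\rightarrow 0$ is useless on $\{|Y^m_s|>r\}$: smallness of the probability does not compensate a possibly non-integrable integrand. The missing idea --- exactly what Pardoux \cite{P}, Briand--Carmona \cite{BC}, and the paper's own two-step proof of Theorem 4.2 supply --- is a preliminary $L^\infty$ reduction: (i) first assume $\xi$ and $f^0$ bounded, derive a uniform bound $\|Y^n\|_\infty\le r$ (this is where $({\bf H4})$ actually enters, through the argument of Proposition 2.1 in \cite{BC}, using $g^0\equiv 0$), and truncate the generator by $\theta_r(y)$ so that only the values of $f$ on $\{|y|\le r+1\}$ matter; on that compact set in $y$, assumption $({\bf H3})$-$(iv)$ and the uniform $\mathcal{M}^2$ bound on $Z^n$ yield an error bound of the type $2\lambda|Z^n_s|\mathbf{1}_{\{|Z^n_s|>n\}}+2\lambda|Z^n_s|\mathbf{1}_{\{\psi_{r+1}(s)>n\}}+2\psi_{r+1}(s)\mathbf{1}_{\{\psi_{r+1}(s)>n\}}$, which tends to $0$ uniformly; (ii) then approximate general square-integrable data by bounded data, the stability estimate involving only differences of the data $\xi_n,\,q_n(f^0)$, not of the generators. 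Without this reduction your Cauchy estimate does not close.
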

\begin{proof}
It follows easily by combining argument of Pardoux (see Theorem 2.2 \cite{P}) with one used in Pardoux and Peng (see Theorem 1.1, \cite{PP1})
\end{proof}

We now prove our existence and uniqueness result.
\begin{theorem}
Under assumptions $({\bf H1)}$-$({\bf H3)}$, BSDE $(\ref{a1})$ has a
unique solution in $S^p \times M^p$.
\end{theorem}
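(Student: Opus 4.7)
The plan is to combine the a priori estimates of Section 3 with an $L^\infty$-truncation procedure: build solutions corresponding to truncated data (for which Theorem 4.1 applies), prove that the approximating sequence is Cauchy in $\mathcal{S}^p\times\mathcal{M}^p$, and pass to the limit in the equation using the continuity assumption (H3)(ii)--(iv).

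\textbf{Uniqueness.} Given two solutions $(Y,Z)$ and $(Y',Z')$, the difference $(\bar Y,\bar Z)=(Y-Y',Z-Z')$ is a solution of a BDSDE with zero terminal value, generator $\bar f(s)=f(s,Y_s,Z_s)-f(s,Y'_s,Z'_s)$ and noise coefficient $\bar g(s)=g(s,Y_s,Z_s)-g(s,Y'_s,Z'_s)$. I apply Corollary 2.1 to $|\bar Y|^p$. The monotonicity (H2)(ii) controls the $y$-interaction in the drift term by $\mu|\bar Y|^p$, the Lipschitz property (H2)(i) together with Young's inequality handles the $z$ part, and (H2)(iii) makes the $|\bar g|^2$ contribution absorb against the sign-definite $c(p)|\bar Y|^{p-2}\mathbf{1}_{\{\bar Y\neq 0\}}|\bar Z|^2$ term precisely because $\alpha<1$ (tuning $\varepsilon$ small as in the proof of Lemma 3.2). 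After taking expectation and using BDG on the two stochastic integrals, Gronwall's lemma yields $Y\equiv Y'$ and hence $Z\equiv Z'$.

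\textbf{Existence by approximation.} For each $n\geq 1$ I truncate the data: set $\xi_n=\xi\,\mathbf{1}_{\{|\xi|\leq n\}}$ and $f_n(t,y,z)=f(t,y,z)-f(t,0,0)+q_n(f(t,0,0))$ with $q_n(x)=xn/(n\vee|x|)$. Since $g(\cdot,0,0)\equiv 0$ by (H3)(iii), the triple $(\xi_n,f_n,g)$ has bounded data satisfying (H1)--(H2) and (H4) (with the same $\varphi$), so Theorem 4.1 produces a solution $(Y^n,Z^n)\in\mathcal{S}^2\times\mathcal{M}^2$. I then apply Corollary 2.1 to $|Y^n-Y^m|^p$ exactly as in the uniqueness argument: the monotonicity kills the dangerous $y$-part of $f_n-f_m$, the Lipschitz $z$-part and the $g$-contribution are absorbed via $\alpha<1$, while the inhomogeneity reduces to $q_n(f^0)-q_m(f^0)$ and $\xi_n-\xi_m$. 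Combining this with Lemmas 3.1 and 3.2 applied to the difference yields
\begin{equation*}
\|Y^n-Y^m\|_{\mathcal{S}^p}^{p}+\|Z^n-Z^m\|_{\mathcal{M}^p}^{p}\leq C_{p,\lambda}\,\E\!\left[|\xi_n-\xi_m|^p+\Bigl(\int_0^T|q_n(f^0_s)-q_m(f^0_s)|\,ds\Bigr)^{p}\right],
\end{equation*}
which tends to $0$ by dominated convergence since $\xi\in L^p$ and $f^0\in\mathcal{M}^p$. Hence $(Y^n,Z^n)$ has a limit $(Y,Z)$ in $\mathcal{S}^p\times\mathcal{M}^p$.

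\textbf{Passage to the limit in the generator.} This is the main obstacle, and it is where (H3)(ii) and (iv) become crucial. Along a subsequence $Y^n\to Y$ and $Z^n\to Z$ almost everywhere. By the Lipschitz property (H2)(i)--(iii), $g(s,Y^n,Z^n)\to g(s,Y,Z)$ in $\mathcal{M}^p$ and the only nonlinear issue is $f_n(s,Y^n,Z^n)\to f(s,Y,Z)$. I split
\begin{equation*}
|f_n(s,Y^n_s,Z^n_s)-f(s,Y_s,Z_s)|\leq |q_n(f^0_s)-f^0_s|+|f(s,Y^n_s,0)-f(s,Y_s,0)|+\lambda|Z^n_s-Z_s|,
\end{equation*}
and for the middle term I localize on $\{\sup_n|Y^n_s|\vee|Y_s|\leq R\}$ where the integrand is bounded by $2\psi_R(s)+2|f^0_s|$, integrable by (H3)(iv) and (H1); off this set the $L^p$-boundedness of $\sup_n|Y^n|$ and Chebyshev provide uniform smallness as $R\to\infty$. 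Combining continuity (H3)(ii) with dominated convergence gives convergence in $\mathcal{M}^1$ of the drift, so one may pass to the limit in integral form of the equation and obtain that $(Y,Z)$ solves \eqref{a1}.
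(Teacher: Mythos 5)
Your uniqueness argument is sound and is essentially the paper's (the paper simply applies Lemma 3.2 to the difference process, whose data have $h^{0}=k^{0}=0$). The genuine gap is in the existence part, at its very first step. You apply Theorem 4.1 to the truncated data $(\xi_n,f_n,g)$ with $f_n(t,y,z)=f(t,y,z)-f^{0}_t+q_n(f^{0}_t)$, asserting that this triple satisfies $({\bf H4})$ ``with the same $\varphi$''. But Theorem 4.2 is stated under $({\bf H1})$--$({\bf H3})$ only: there is no $\varphi$, and $({\bf H4})$ is not among the available hypotheses. Your truncation modifies only the inhomogeneity $f^{0}_t$ and leaves the $y$-dependence of $f$ untouched; under $({\bf H3})$-$(iv)$ that dependence is controlled only by the random function $\psi_r(t)=\sup_{|y|<r}|f(t,y,0)-f^{0}_t|$, which is merely integrable on $[0,T]\times\Omega$ and need not be dominated by any deterministic continuous increasing $\varphi(|y|)$. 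Hence $f_n$ need not satisfy $({\bf H4})$, Theorem 4.1 cannot be invoked, and your approximating sequence has no starting point. (A correct appeal to Theorem 4.1 here would amount to proving the theorem under the additional hypothesis $({\bf H4})$, a strictly weaker statement.)

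The paper closes exactly this hole with an intermediate approximation layer (its Step 1). For bounded $\xi$ and bounded $f^{0}$ it introduces
\begin{equation*}
h_n(t,y,z)=\theta_r(y)\,\bigl(f(t,y,q_n(z))-f^{0}_t\bigr)\,\frac{n}{\psi_{r+1}(t)\vee n}+f^{0}_t,
\end{equation*}
where $\theta_r$ is a smooth cutoff in $y$ and the factor $n/(\psi_{r+1}(t)\vee n)$ caps the random bound $\psi_{r+1}(t)$ at level $n$: this generator has bounded $y$-increments while remaining monotone (with a positive constant), so it does satisfy the hypotheses of Theorem 4.1. The resulting solutions obey the uniform bound $\|Y^{n}\|_{\infty}\leq r$ (by the Briand--Carmona argument, using $g^{0}\equiv 0$), so the cutoff $\theta_r$ is inactive and $(Y^{n},Z^{n})$ in fact solves the equation driven by $f_n(t,y,z)=(f(t,y,q_n(z))-f^{0}_t)\frac{n}{\psi_{r+1}(t)\vee n}+f^{0}_t$. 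The Cauchy property in $\mathcal{S}^{2}\times\mathcal{M}^{2}$ then follows from the pointwise bound
\begin{equation*}
|f_{n+i}(s,Y^{n}_s,Z^{n}_s)-f_{n}(s,Y^{n}_s,Z^{n}_s)|\leq 2\lambda|Z^{n}_s|{\bf 1}_{\{|Z^{n}_s|>n\}}+2\lambda|Z^{n}_s|{\bf 1}_{\{\psi_{r+1}(s)>n\}}+2\psi_{r+1}(s){\bf 1}_{\{\psi_{r+1}(s)>n\}},
\end{equation*}
which is precisely where $({\bf H3})$-$(iv)$ and the bound $\|Z^{n}\|_{\mathcal{M}^{2}}\leq r'$ from Lemma 3.1 enter. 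Only after this step does the truncation of $\xi$ and $f^{0}$ that you propose (the paper's Step 2, whose Cauchy estimate via Lemma 3.2 you reproduce correctly) make sense, with existence for each truncated triple supplied by Step 1 rather than by Theorem 4.1. So your Step 2 and limit passage are fine in spirit, but the argument is missing the entire intermediate truncation in $y$, $z$ and $\psi_{r+1}$ that makes the induction base legitimate.
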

\begin{proof}
{\bf Uniqueness}
\newline
Let us consider $(Y,Z)$ and $(Y',Z')$ two solutions of backward SDE with data $\left(\xi,f,g \right)$ in the appropriate space. We denote by $(U, V)$ the process
$(Y - Y' , Z - Z')$; we show easily that this process is solution to the following backward doubly SDE:
\begin{eqnarray*}
U_t=\int_{t}^{T}h(s,U_s,V_s)ds+\int_{t}^{T}k(s,U_s,V_s)\overleftarrow{dB}_s-\int_{t}^{T}V_sdW_s,\;\; 0\leq t\leq T,
\end{eqnarray*}
where $h$ and $k$ stand the random functions
\begin{eqnarray*}
h(s,y,z)=f(s,y+Y'_s,z+Z'_s)-f(s,Y'_s,Z'_s),
\\
k(s,y,z)=g(s,y+Y'_s,z+Z'_s)-g(s,Y'_s,Z'_s).
\end{eqnarray*}
Thanks to assumptions $({\bf H2})$, functions $h$ and $k$ satisfy assumption $(\ref{A})$ with
respectively $h^{0}_s=k^{0}_s= 0$. By Lemma 3.2, we get immediately that $(U,V) = (0, 0)$.

{\bf Existence} \newline In order to simplify the calculations, we will
always assume that condition $({\bf H2}$-$(ii))$ is satisfied with $\mu =0$. If it is not true, the change
of variables $\tilde{Y}_t = e^{\mu t}Y_t , \tilde{Z}_t = e^{\mu t}Z_t$ reduces to this case. We also split existence into two steps
\newline
{\bf Step 1.} In this part $\xi,\, \sup f^{0}_{t},\,
$ are supposed bounded random variables and $r$ a
positive real such that
\begin{eqnarray*}
e^{(1+\lambda^{2})T}(\|\xi\|_{\infty}+T\|f^{0}
\|_{\infty}) &<& r.
\end{eqnarray*}
Let $\theta_{r}$ be a smooth function such that $0\leq
\theta_{r}\leq 1$ and
\begin{eqnarray*}
\theta_{r}(y)=\left\{ \begin{array}{l}
1\text{ for }|y|\leq r \\
\\
0\text{ for }|y|\geq r+1.
\end{array}
\right.
\end{eqnarray*}
For each $n\in\N^{*}$, we denote $q_{n}(z)=z\frac{n}{|z|\vee n }$ and set
\begin{eqnarray*}
h_{n}(t,y,z)&=&\theta_{r}(y)(f(t,y,q_{n}(z))-f_{t}^{0})\frac{n}{\pi_{r+1}
(t)\vee n}+f_{t}^{0}.
\end{eqnarray*}
Thanks to Pardoux et al. \cite{Pal}, this function still satisfies quadratic condition $({\bf H2}$-$(ii))$ but with a positive constant.
Then data $(\xi,h_{n},g)$ satisfies assumptions of Theorem 4.1. Hence, for each
$n\in\N^*$, backward doubly SDE associated to $(\xi,h_{n},g)$ has a unique solution
$(Y^{n},Z^{n})$ in space $\mathcal{S}^{2}
\times\mathcal{M}^{2}$.

Since
\begin{eqnarray*}
y\ h_{n}(t,y,z)&\leq& |y|\ \|f^{0}\|_{\infty}+\lambda|y|\ |z|,
\end{eqnarray*}
$\xi$  is bounded and $g^{0}_t$ is null, the similar argument used in \cite{BC} (see proposition $2.1$) provide that the process $Y^{n}$ satisfies the inequality $\|Y^{n}\|_{\infty}\leq r$. In addition, from Lemma 3.2,
\begin{eqnarray}
\|Z^{n}\|_{\mathcal{M}^{2}}\leq r'\label{Zborne}
\end{eqnarray}
where $r'$ is another constant. As a byproduct
$(Y^{n},Z^{n})$ is a solution to backward doubly SDE associated to $(\xi,f_{n},g)$ where
\begin{eqnarray*}
f_{n}(t,y,z)&=&(f(t,y,q_{n}(z))-f_{t}^{0})\frac{n}{\pi_{r+1}
(t)\vee n}+f_{t}^{0}
\end{eqnarray*}
which satisfied assumption $({\bf H2}$-$(ii))$ with $\mu=0$.

We now have, for $i\in\N$, setting
$
\begin{array}{l}
\bar{Y}^{n,i}=Y^{n+i}-Y^{n},\ \bar{Z}^{n,i}=Z^{n+i}-Z^{n},
\end{array}
$
applying assumptions $({\bf H2})$ on $f_{n+i}$ and $g$
\begin{eqnarray*}
&&e^{at}|\bar{Y}_{t}^{n,i}|^{2}+(1-\varepsilon-\alpha)\int_{t}^{T}
e^{a s}|\bar{Z}_{s}^{n,i}|^{2}ds \\
&\leq&
2\int_{t}^{T}e^{a s}\langle\bar{Y}_{s}^{n,i},f_{n+i}(s,Y_{s}^{n},Z_{s}^{n})
-f_{n}(s,Y_{s}^{n},Z_{s}^{n})\rangle ds \\
&&+(\frac{1}{\varepsilon}\lambda^{2}+\lambda-a)\int_{t}^{T}e^{a s}|\bar{Y}^{n,i}_{s}|^{2}ds\\
&&+2\int_{t}^{T}e^{a s}\langle\bar{Y}_{s}^{n,i},(g(s,Y^{n+i}_s,Z^{n+i}_s)-g(s,Y^{n}_s,Z^{n}_s))dB_{s}\rangle\\
&&-2\int_{t}^{T}e^{a s}\langle\bar{Y}_{s}^{n,i},\bar{Z}_{s}^{n,i}dW_{s}\rangle,
\end{eqnarray*}
for any $a>0$ and $\varepsilon>0$. Next, choosing $\varepsilon$ small enough such that $\gamma=1-\varepsilon-\alpha>0$ and after $\alpha$ such that
$(\frac{1}{\varepsilon}\lambda^{2}+\lambda-a)\leq 0$, we obtain
\begin{eqnarray*}
&&e^{at}|\bar{Y}_{t}^{n,i}|^{2}+\gamma\int_{t}^{T}
e^{a s}|\bar{Z}_{s}^{n,i}|^{2}ds \\
&\leq&
2\int_{t}^{T}e^{a s}\langle\bar{Y}_{s}^{n,i},f_{n+i}(s,Y_{s}^{n},Z_{s}^{n})
-f_{n}(s,Y_{s}^{n},Z_{s}^{n})\rangle ds \\
&&+(\frac{1}{\varepsilon}\lambda^{2}+\lambda-a)\int_{t}^{T}e^{a s}|\bar{Y}^{n,i}_s|^{2}ds\\
&&+2\int_{t}^{T}e^{a s}\langle\bar{Y}_{s}^{n,i},(g(s,Y^{n+i}_s,Z^{n+i}_s)-g(s,Y^{n}_s,Z^{n}_s))dB_{s}\rangle\\
&&-2\int_{t}^{T}e^{a s}\langle\bar{Y}_{s}^{n,i},\bar{Z}_{s}^{n,i}dW_{s}\rangle.
\end{eqnarray*}
But $\|\bar{Y}^{n,i}\|_{\infty}\leq 2r$ so that
\begin{eqnarray*}
&&e^{at}|\bar{Y}_{t}^{n,i}|^{2}+\gamma\int_{t}^{T}
e^{a s}|\bar{Z}_{s}^{n,i}|^{2}ds \\
&\leq&
4r\int_{t}^{T}e^{a s}|f_{n+i}(s,Y_{s}^{n},Z_{s}^{n})
-f_{n}(s,Y_{s}^{n},Z_{s}^{n})|ds \\
&&+(\frac{1}{\varepsilon}\lambda^{2}+\lambda-a)\int_{t}^{T}e^{a s}|\bar{Y}^{n,i}_s|^{2}ds\\
&&+2\int_{t}^{T}e^{a s}\langle\bar{Y}_{s}^{n,i},(g(s,Y^{n+i}_s,Z^{n+i}_s)-g(s,Y^{n}_s,Z^{n}_s))dB_{s}\rangle\\
&&-2\int_{t}^{T}e^{a s}\langle\bar{Y}_{s}^{n,i},\bar{Z}_{s}^{n,i}dW_{s}\rangle
\end{eqnarray*}
and using successively Gronwall lemma and the BDG inequality, we get, for a constant $C$ depending only on $\lambda,\,\alpha$ and $T$,
\begin{eqnarray*}
&&\E\left[\sup_{0\leq t\leq T}|\bar{Y}_{t}^{n,i}|^{2}+\int_{0}^{T}|\bar{Z}_{s}^{n,i}|^{2}ds\right]\leq
Cr\E\left[\int_{0}^{T}|f_{n+i}(s,Y_{s}^{n},Z_{s}^{n})
-f_{n}(s,Y_{s}^{n},Z_{s}^{n})|ds\right].
\end{eqnarray*}
On the other hand, since $\|Y^{n}\|_{\infty}\leq r$, we get
\begin{eqnarray*}
|f_{n+i}(s,Y_{s}^{n},Z_{s}^{n})-f_{n}(s,Y_{s}^{n},Z_{s}^{n})|
&\leq& 2\lambda|Z^{n}_s|{\bf 1}_{\{|Z^{n}_s|\ >n\}}
+2\lambda|Z^{n}_s|{\bf
1}_{\{\pi_{r+1}(s)>n\}}+2\pi_{r+1}(s){\bf 1}_{\{\pi_{r+1}(s)>n\}}
\end{eqnarray*}
from which we deduce, according assumption $({\bf H3}$-$(iv))$ and
inequality $(\ref{Zborne})$ that $(Y^{n},Z^{n})$ is a cauchy sequence
in the Banach space $\mathcal{S}^{2}\times \mathcal{M}^{2}$. It is easy to pass to the limit in the
approximating equation, yielding a solution to backward doubly SDE $(\ref{a1})$.

{\bf Step 2.}\ We now treat the general
case. For each $n\in \N^{*}$,\ let us define
\begin{eqnarray*}
\xi_{n}= q_{n}(\xi),\,\;\;f_{n}(t,y,z)= f\left(t,y,z\right)-f^{0}_{t}+q_{n}(f^{0}_{t}).
\end{eqnarray*}
For each triplet $(\xi_{n},f_{n},g)$, BSDE (1) has a unique solution $(Y^{n},Z^{n})\in L^{2}$\ thanks to the
first step of this proof, but in fact also in all $L^{p}, p>1$ according to Lemma 3.1. Now from Lemma 3.2 an assumption $({\bf H2})$, for $(i,n)\in \N\times\N^{*}$,
\begin{eqnarray*}
&&\E\left\{\sup_{0\leq t\leq T}|Y_{t}^{n+i}-Y_{t}^{n}|^{p}+
\left(\int_{0}^{T}|Z_{s}^{n+i}-Z_{s}^{n}|^{2}ds\right)^{p/2}\right\}\\
&\leq& C\E\left\{|\xi_{n+i}-\xi_{n}|^{p}+\left(\int_{0}^{T}
|q_{n+i}(f^{0}_{s})-q_{n}(f^{0}_{s})|ds\right)^{p}\right\},
\end{eqnarray*}
where $C$ depends  on $T,\, \alpha$ and $\lambda $.

The right-hand side of the last inequality clearly tends to $0$, as $n\rightarrow\infty$, uniformly in $i$,
so we have again a Cauchy sequence and the limit is a solution to backward doubly SDE $(\ref{a1})$.
\end{proof}

\end{document}